\newcommand{\e}{\varepsilon}
\newcommand{\U}{\mathcal U}
\newcommand{\V}{\mathcal V}
\newcommand{\C}{\mathcal C}
\newcommand{\A}{\mathcal A}
\newcommand{\W}{\mathcal W}
\newcommand{\IR}{\mathbb R}
\newcommand{\IN}{\mathbb N}
\newcommand{\IB}{\mathbb B}
\newcommand{\IQ}{\mathbb Q}
\newcommand{\IZ}{\mathbb Z}
\newcommand{\pr}{\mathrm{pr}}
\newcommand{\w}{\omega}
\newtheorem{theorem}{Theorem}
\newtheorem{corollary}{Corollary}
\newtheorem{problem}{Problem}
\newtheorem{example}{Example}
\newtheorem{claim}{Claim}
\title[The connected countable spaces of Bing and Ritter are topologically homogeneous]{The connected countable spaces of Bing and Ritter\\ are topologically homogeneous}
\author{Iryna Banakh, Taras Banakh, Olena Hryniv, Yaryna Stelmakh}
\subjclass{54D05, 54D10}
\address{I.Banakh: Ya. Pidstryhach Institute for Applied Problems of Mechanics and Mathematics of National Academy of Sciences of Ukraine, Naukova 3b, 79060, Lviv, Ukraine}
\email{ibanakh@yahoo.com}
\address{T.Banakh: Ivan Franko National University of Lviv (Ukraine) and Jan Kochanowski University in Kielce (Poland)}
\email{t.o.banakh@gmail.com}
\address{O.Hryniv and Ya. Stelmakh: Ivan Franko National University of Lviv, Universytetska 1, 79000, Ukraine}
\email{ohryniv@gmail.com, yarynziya@ukr.net}
\begin{document}
\begin{abstract}
Answering a problem posed by the second author on Mathoverflow \cite{MO}, we prove that the connected countable Hausdorff spaces constructed by Bing \cite{Bing} and Ritter \cite{Rit76} are  topologically homogeneous.
\end{abstract}
\maketitle

In \cite{Bing} Bing presented a simple construction of a Hausdorff space which is countable and connected. This example is included to the book ``Counterexamples in Topology'' \cite[Ex.75]{SS}. The first (difficult) example of a connected countable Hausdorff space was constructed by Urysohn in \cite{Urysohn}. For some other examples of such spaces, see \cite{BMT}, 
\cite{Brown}, \cite{Cvid}, \cite{Golomb59}, \cite{Golomb61}, \cite{Gustin}, \cite{Kannan}, \cite{Kirch}, \cite{Law}, \cite{Martin}, \cite{Miches}, \cite{Miller}, \cite{OR}, \cite{Rit76}, \cite{Rit77}, \cite{Rit78},
\cite[Ex. 61,126]{SS}, \cite{Tz88}, \cite{Tz96}, \cite{Tz98}.

The Bing space $\IB$ is the rational half-plane $\{(x,y)\in\IQ\times\IQ:y\ge 0\}$ endowed with the topology $\tau$ consisting of the sets $U\subset\IB$ such that for every point $(a,b)\in U$ there exists $\e>0$ such that
$$\{(x,0)\in\IB:|x-(a-b/\sqrt{3})|<\e\}\cup
\{(x,0)\in\IB:|x-(a+b/\sqrt{3})|<\e\}\subset U.$$
Observe that the points $(a-b/\sqrt{3})$ and $(a+b\sqrt{3})$ are the base vertices of the equilateral triangle with vertex at $(a,b)$ and base on the line $\IR\times\{0\}$.

It is easy to see that for any rational numbers $a>0$ and $b$ the affine map $f:\IB\to\IB$, $f:(x,y)\mapsto (ax+b,ay)$, is a homeomorphism of the Bing space $\IB$. This implies that the homeomorphism group of the Bing space $\IB$ has at most two orbits: $\IB_0:=\IQ\times\{0\}$ and $\IB\setminus\IB_0$. On the other hand, the Bing space endowed with the topology inherited from the Euclidean plane is a countable metrizable space without isolated points and hence is  homeomorphic to the space $\IQ$ of rational numbers according to the classical theorem of Sierpi\'nski \cite[6.2.A(d)]{En}.

These observations motivates the following problem posed by the second author on Mathoverflow \cite{MO}.

\begin{problem}
Is the Bing space $\IB$ topologically homogeneous?
\end{problem}

In this paper we shall answer this problem affirmatively. Moreover, we shall prove that any bijection $f:A\to B$ between $\theta$-discrete subsets $A,B$ of the Bing space extends to a homeomorphism of $\IB$.

A subset $D$ of a topological space $X$ is called 
\begin{itemize}
\item {\em discrete}  if each point $x\in X$ has a neighborhood $O_x\subset X$ that contains at most one point of the set $D$;
\item {\em $\theta$-discrete} in $X$ if each point $x\in X$ has a neighborhood $O_x\subset X$ whose closure $\bar O_x$ contains at most one point of the set $D$;
\item {\em $\theta$-closed} in $X$ if each point $x\in X\setminus D$ has a neighborhood $O_x\subset X$ whose closure $\bar O_x$ does not intersect the set $D$.
\end{itemize}
It is easy to see that each $\theta$-discrete subset of a topological space $X$ (satisfying the separation axiom $T_1$) is discrete (and closed in $X$). A subset $D$ of a regular topological space $X$ is $\theta$-discrete if and only if it is discrete and closed in $X$.

On the other hand, we have 

\begin{example}\label{ex1} The Bing space $\IB$ contains  a $\theta$-closed discrete subset, which is not $\theta$-discrete.
\end{example}

\begin{proof}   Take any sequence $\{a_n\}_{n\in\w}\subset\IB\setminus\IB_0$ that converges to zero $(0,0)$ in the Euclidean topology on $\IB\subset\IQ\times\IQ$.
It can be shown that the set $A=\{(0,0)\}\cup\{a_n\}_{n\in\w}$ is $\theta$-closed and discrete in $\IB$. On the other hand, $A$ is not $\theta$-discrete (as the closure of any neighborhood of $0$ contains all but finitely many points of the set $A$).
\end{proof}


The main result of this paper is the following theorem that will be proved in Section~\ref{s:proof}.

\begin{theorem}\label{main1} Any bijection $f:A\to B$ between any $\theta$-discrete subsets $A,B$ of the Bing space extends to a homeomorphism $\bar f:\IB\to\IB$ of $\IB$.
\end{theorem}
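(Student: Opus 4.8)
My plan is to recast the topology of $\IB$ through the \emph{feet} of its points. To $p=(a,b)\in\IB$ I attach $F(p)=\{a\}$ when $b=0$ and $F(p)=\{a-b/\sqrt3,\ a+b/\sqrt3\}$ when $b>0$; every foot lies in the field $\IQ(\sqrt3)=\IQ\oplus\IQ\sqrt3$, the two feet of an upper point are interchanged by the conjugation $\sigma:q+r\sqrt3\mapsto q-r\sqrt3$, and the sets $\{p\}\cup\bigcup_{t\in F(p)}\{(x,0):x\in\IQ,\ |x-t|<\e\}$ form a neighborhood base at $p$. The facts I would record first are: every element of $\IQ(\sqrt3)$ is a foot of exactly one point of $\IB$, so distinct points have disjoint foot sets; $\IB_0$ is open; and $q\in\overline S$ holds (for $q\notin S$) precisely when some foot of $q$ is a limit, in $\IR$, of the axis points of $S$. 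Thus the whole topology is encoded by how the rationals $\IB_0$ cluster, in the real line, around the feet of the remaining points.

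Using this language I would characterize the hypothesis: $A\subset\IB$ is $\theta$-discrete if and only if $\Phi(A)=\bigcup_{a\in A}F(a)$ has no accumulation point inside $\IQ(\sqrt3)$, i.e.\ every $t\in\IQ(\sqrt3)$ has a real neighborhood $V$ with $V\cap\Phi(A)\subseteq\{t\}$. (The set of Example~\ref{ex1} violates this, its feet accumulating at the foot $0\in\IQ(\sqrt3)$ of $(0,0)$.) This is exactly the condition guaranteeing that the feet of $A$ never trap the probing rationals, and it is preserved by the homeomorphisms to be constructed. As a preliminary normalization I would use the affine homeomorphisms $(x,y)\mapsto(\alpha x+\beta,\alpha y)$ with $\alpha\in\IQ$, $\alpha>0$, $\beta\in\IQ$, which act transitively on $\IB_0$ and on $\IB\setminus\IB_0$ separately, to fix a convenient position for one chosen point of $A$; the essential difficulty, merging the two orbits, lies deeper.

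For the construction itself I would run a back-and-forth recursion over two enumerations of $\IB$, building an increasing chain of finite partial bijections $f=g_0\subset g_1\subset\cdots$, all extending $f$, whose union $\bar f$ is a bijection of $\IB$. The delicate point is the invariant carried along: for each point already placed I would store a tolerance together with a finite \emph{pattern} recording which of the already-placed rationals must sit near which feet, on the domain and on the range side simultaneously, and I would require each $g_k$ to respect these patterns. If the invariant is maintained, then in the limit every basic neighborhood is eventually captured by a recorded pattern, so $\bar f$ and $\bar f^{-1}$ are both continuous. The engine of the extension steps is the homogeneity and universality of the countable dense orders $\IQ$ and $\IQ(\sqrt3)$: any finite demand of the form ``these rationals cluster at these feet'' can be satisfied and later refined, and an axis point can be turned into an upper point by splitting one cluster of probing rationals into two $\sigma$-symmetric clusters (and conversely).

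The step I expect to be the main obstacle is this one-point extension, and within it the axis$\leftrightarrow$upper conversions. The difficulty is structural: the rationals $\IB_0$ are at once points to be matched and the detectors of the topology at \emph{every} upper point, so relocating them to perform a single conversion silently perturbs the neighborhood structure at infinitely many other points, and a self-homeomorphism need not preserve $\IB_0$ at all. Since one axis point carries one foot while its image carries two, the map is not a bijection at the level of $\IQ(\sqrt3)$, so one cannot simply transport a self-homeomorphism of $\IQ$; the invariant must be strong enough to survive this yet loose enough to remain extendible in both directions. The second thing to check is that no conversion ever creates a foot-accumulation inside $\IQ(\sqrt3)$, which would destroy the $\theta$-discreteness of the image — this is precisely where the hypothesis is used and where Example~\ref{ex1} shows it cannot be dropped. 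Once the extension lemma is established with an invariant robust under both the forth and the back moves, assembling $\bar f$ and verifying continuity of $\bar f^{\pm1}$ is routine.
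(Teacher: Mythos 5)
Your setup is sound and coincides with the paper's opening moves: the passage to feet is the paper's pair of projections $\pr_\pm:\IB\to\IQ+\sqrt{3}\IQ$, and your characterization of $\theta$-discreteness of $A$ as closedness-and-discreteness of the foot set $A_\pm$ in $\IQ+\sqrt{3}\IQ$ is exactly Claim~\ref{cl2} (together with its easy converse). But there is a genuine gap at precisely the point you yourself flag as ``the main obstacle'': the invariant that makes the back-and-forth converge to a homeomorphism is never defined. ``A tolerance together with a finite pattern recording which of the already-placed rationals must sit near which feet'' is not a workable invariant as stated, because continuity of the limit map at an upper point $z$ is a condition on \emph{all} of $\IB_0$ near the two feet of $z$, not only on the finitely many rationals already placed; a finite partial bijection respecting finitely many such constraints can still be extended so as to ruin continuity at some point not yet enumerated. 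You acknowledge this (``relocating them \dots silently perturbs the neighborhood structure at infinitely many other points'') but do not resolve it, so the one-point extension lemma --- which carries the entire content of the theorem --- is asserted rather than proved.

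What is missing is a global combinatorial object controlling all points simultaneously. The paper's device is a sequence of disjoint clopen covers $\U_n$ of the foot line $\IQ+\sqrt{3}\IQ$ by order-convex sets of diameter $<2^{-n}$, each refining its predecessor, together with bijections $\varphi_n:\U_n\to\U_n$ coherent with the refinements (if $V\subset U$ with $V\in\U_n$, $U\in\U_{n-1}$, then $\varphi_n(V)\subset\varphi_{n-1}(U)$). Each such cover induces a partition $\U_n^\diamond=\{U\diamond V:U,V\in\U_n\}$ of all of $\IB$ into ``diamonds,'' and the single inductive condition $f_n(x)\in\varphi_n^\diamond(\U_n^\diamond(x))$ for all $x\in A_n$, maintained for every $n$ with mesh tending to $0$, is what forces $\bar f$ and $\bar f^{-1}$ to be continuous everywhere; this is the precise form of your ``patterns,'' and it is a statement about the whole cover rather than about the finitely many placed points. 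Your axis$\leftrightarrow$upper conversion is then absorbed not into the extension step but into the definition of admissibility of the covers: around the two feet of each point of $\check A\cup\check B$ (the finitely described set of points of $A_0\cup B_0$ that $f_0$ moves across $\IB_0$) the cover is required to have a distinguished cell that is the union of two small order-convex pieces, one around each foot, so that $\varphi_n$ can match this doubled cell with a single cell containing the unique foot of the corresponding axis point. Note also that the set of points changing stratum must be confined to $A_0\cup B_0$: the recursion must insist (as the paper's conditions (3) and (6) do) that every newly added pair $a_n\mapsto b_n$ satisfies $a_n\in\IB_0$ if and only if $b_n\in\IB_0$, since otherwise the bookkeeping of doubled cells never terminates. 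Without a device of this kind your outline cannot be completed as written.
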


Now we present some corollaries of this theorem.

A topological space $X$ is called {\em $n$-homogeneous} for a positive integer number $n$ if any bijection $f:A\to B$ between $n$-element subsets of $X$ can be extended to a homeomorphism of $X$. Observe that a topological space is topologically homogeneous if and only if it is 1-homogeneous. It is easy to see that the real line is 2-homogeneous but not 3-homogeneous and the circle is 3-homogeneous but not 4-homogeneous. The space $\IQ$ of rational numbers is $n$-homogeneous for every $n\in\IN$. Since each finite subset of a Hausdorff space is $\theta$-discrete, Theorem~\ref{main1} implies the following corollary.

\begin{corollary} The Bing space $\IB$ is $n$-homogeneous for every $n\in\IN$.
\end{corollary}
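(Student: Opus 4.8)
The corollary is designed to be an immediate consequence of Theorem~\ref{main1}, so the plan is simply to reduce $n$-homogeneity to the hypotheses of that theorem by checking that every finite subset of the Bing space is $\theta$-discrete. First I would recall that $\IB$ is a Hausdorff space, which is the defining feature of Bing's construction. Then, fixing a positive integer $n$ and an arbitrary bijection $f:A\to B$ between $n$-element subsets $A,B\subset\IB$, I would verify that both $A$ and $B$ are $\theta$-discrete in $\IB$; once that is in hand, Theorem~\ref{main1} produces a homeomorphism $\bar f:\IB\to\IB$ extending $f$, which witnesses that $\IB$ is $n$-homogeneous.

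The only substantive point is the $\theta$-discreteness of a finite subset $D$ of a Hausdorff space $X$, and I would argue this pointwise. Fix $x\in X$ and, for each of the finitely many points $d\in D$ with $d\neq x$, use the Hausdorff axiom to choose disjoint open sets $U_d\ni x$ and $V_d\ni d$. Since $V_d$ is then a neighborhood of $d$ disjoint from $U_d$, we have $d\notin\bar U_d$. Setting $O_x:=\bigcap_{d\in D\setminus\{x\}}U_d$, a finite intersection and hence an open neighborhood of $x$, we obtain $\bar O_x\subseteq\bigcap_{d}\bar U_d$, so $\bar O_x$ contains no point of $D$ apart from possibly $x$ itself. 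Thus $\bar O_x$ meets $D$ in at most one point, which is exactly the definition of $\theta$-discreteness. Applying this to $A$ and to $B$ completes the reduction.

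I do not expect a genuine obstacle here, since all the work has already been carried out in Theorem~\ref{main1}; this corollary merely records that finite sets fall under its scope. The only small care needed is the standard fact that finite intersections are well behaved under closure, $\overline{\bigcap U_d}\subseteq\bigcap\bar U_d$, together with the observation that finiteness of $D$ is what keeps $O_x$ open. Taking $n=1$ in particular recovers the topological homogeneity of $\IB$ promised in the abstract, so this single corollary already answers the motivating problem for all $n\in\IN$ simultaneously.
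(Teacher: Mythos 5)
Your proposal is correct and follows exactly the paper's route: the paper derives this corollary from Theorem~\ref{main1} via the one-line observation that each finite subset of a Hausdorff space is $\theta$-discrete, and your pointwise Hausdorff-separation argument is a valid (if more detailed) verification of that observation.
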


In \cite{Rit76} Ritter observed that the topology $\hat\tau$ on $\IB$ generated by the base consisting of regular open subsets of $\IB$ turns $\IB$ into a connected locally connected countable Hausdorff space. The obtained topological space $(\IB,\hat\tau)$ will be denoted by $\hat \IB$ and called the {\em Ritter space}. We recall that a set $U$ in a topological space $X$ is {\em regular open} if $U$ coincides with the interior of its own closure $\bar U$ in $X$. Observe that each homeomorphism of the Bing space $\IB$ remains a homeomorphism of the Ritter space $\hat\IB$.

\begin{corollary} Any bijection $f:A\to B$ between $\theta$-discrete subsets $A,B$ of the Ritter space $\hat \IB$ extends to a homeomorphism $\bar f:\hat\IB\to\hat\IB$ of $\IB$.
\end{corollary}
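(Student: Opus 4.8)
The plan is to reduce the statement to Theorem~\ref{main1} by transferring the hypothesis along the inclusion $\hat\tau\subseteq\tau$ and then transferring the conclusion back. The two facts I would rely on are, first, that the Ritter topology $\hat\tau$ is coarser than the Bing topology $\tau$ (being generated by a subfamily of $\tau$, namely the regular open subsets of $\IB$), and second, the observation recorded just above the corollary that every homeomorphism of $\IB$ is automatically a homeomorphism of $\hat\IB$.

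First I would check that a subset which is $\theta$-discrete in $\hat\IB$ is also $\theta$-discrete in $\IB$. This is the only place where the relation between the two topologies enters. Let $D$ be $\theta$-discrete in $\hat\IB$ and fix a point $x\in\IB$; choose a $\hat\tau$-neighborhood $O_x$ of $x$ whose closure in $\hat\tau$ meets $D$ in at most one point. Since $\hat\tau\subseteq\tau$, the set $O_x$ is also a $\tau$-neighborhood of $x$, and because the closure in a finer topology is contained in the closure in a coarser one, the $\tau$-closure of $O_x$ is contained in its $\hat\tau$-closure. Hence the $\tau$-closure of $O_x$ meets $D$ in at most one point, so $D$ is $\theta$-discrete in $\IB$. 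Applying this to both $A$ and $B$, the hypotheses of Theorem~\ref{main1} are satisfied in the Bing space.

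Next I would invoke Theorem~\ref{main1} to obtain a homeomorphism $\bar f\colon\IB\to\IB$ of the Bing space extending $f$. Finally, by the quoted observation that every homeomorphism of $\IB$ remains a homeomorphism of $\hat\IB$, the very same map $\bar f$ is a homeomorphism of the Ritter space $\hat\IB$; as it still extends $f$, this completes the argument.

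I do not anticipate a genuine obstacle here, since the corollary is essentially a transfer of Theorem~\ref{main1} across the two comparable topologies on the common underlying set $\IB$. The only point requiring care is the \emph{direction} of the implication in the first step, namely that $\theta$-discreteness passes from the coarser topology $\hat\tau$ to the finer topology $\tau$ and not conversely; this is exactly what the monotonicity of the closure operator under refinement of the topology guarantees.
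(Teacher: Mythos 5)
Your proposal is correct and follows the same route as the paper: transfer $\theta$-discreteness from the coarser Ritter topology to the Bing topology, apply Theorem~\ref{main1}, and use the observation that every homeomorphism of $\IB$ remains a homeomorphism of $\hat\IB$. You merely spell out the closure-monotonicity argument that the paper leaves as an ``observe''.
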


\begin{proof} Observe that the $\theta$-discrete sets $A,B$ of the Ritter space remain $\theta$-discrete in the topology of the Bing space $\IB$. By Theorem~\ref{main1}, the bijection $f:A\to B$ extends to a homeomorphism $\bar f:\IB\to\IB$ of the Bing space and $\bar f$ remains a homeomorphism of the Ritter space $\hat\IB$.
\end{proof}

\begin{corollary} The Ritter space $\hat\IB$ is $n$-homogeneous for every $n\in\IN$.
\end{corollary}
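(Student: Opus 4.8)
The plan is to deduce this corollary from the immediately preceding one (the extension result for $\theta$-discrete subsets of $\hat\IB$), exactly as the $n$-homogeneity of the Bing space $\IB$ was deduced from Theorem~\ref{main1}. By definition, to prove that $\hat\IB$ is $n$-homogeneous I must show that an arbitrary bijection $f\colon A\to B$ between two $n$-element subsets $A,B\subset\hat\IB$ extends to a homeomorphism of $\hat\IB$. The whole content of the argument is therefore to recognize that the finite sets $A$ and $B$ already fall under the hypothesis of the previous corollary.

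First I would invoke Ritter's observation, recalled just above, that the Ritter space $\hat\IB$ is Hausdorff. Then I would use the elementary fact (noted in the discussion preceding the $n$-homogeneity corollary for $\IB$) that every finite subset of a Hausdorff space is $\theta$-discrete: given a point $x$ and a finite set $D$, one separates $x$ from each point $d\in D\setminus\{x\}$ by disjoint open sets $U_d\ni x$ and $V_d\ni d$, and then the neighborhood $O_x:=\bigcap_{d\in D\setminus\{x\}}U_d$ has closure $\bar O_x\subseteq\bigcap_d(X\setminus V_d)$ meeting $D$ in at most the single point $x$. Applying this to the $n$-element sets $A$ and $B$ shows that both are $\theta$-discrete in $\hat\IB$.

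Having established this, the proof closes in one line: by the previous corollary the bijection $f\colon A\to B$ between the $\theta$-discrete subsets $A,B$ of $\hat\IB$ extends to a homeomorphism $\bar f\colon\hat\IB\to\hat\IB$, which is exactly the extension required for $n$-homogeneity.

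There is no genuine obstacle here; the statement is a direct specialization of the preceding corollary to the case of finite sets. The only point requiring a word of justification—and the one I would be careful to make explicit—is that finiteness together with the Hausdorff property of $\hat\IB$ guarantees $\theta$-discreteness, since $\theta$-discreteness (unlike ordinary discreteness of finite sets) is the genuinely relevant hypothesis and, as Example~\ref{ex1} shows, is strictly stronger than discreteness in $\IB$ in general. For finite sets, however, the Hausdorff separation argument above supplies it immediately, so all the real work has already been done in Theorem~\ref{main1} and its corollaries.
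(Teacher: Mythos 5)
Your proposal is correct and follows exactly the route the paper intends: the corollary is an immediate consequence of the preceding extension corollary for $\theta$-discrete subsets of $\hat\IB$, combined with the fact (already noted in the paper before the analogous corollary for $\IB$) that every finite subset of a Hausdorff space is $\theta$-discrete. Your explicit Hausdorff separation argument for that fact is a correct and welcome elaboration, but the approach is the same.
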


It is known that any homeomorphism $h:A\to B$ between closed nowhere dense subsets $A,B$ of the space $\IQ$ of rational numbers extends to a homeomorphism $\bar h$ of $\IQ$. In particular, any homeomorphism  between closed discrete subspaces of $\IQ$ extends to a homeomorphism  of $\IQ$. This homogeneity property does not hold for the Bing space.

\begin{example}\label{ex2} There exists a bijection $f:A\to B$ between $\theta$-closed discrete subsets $A,B$ of $\IB$ that cannot be extended to a homeomorphism of $\IB$.
\end{example}

\begin{proof} By Example~\ref{ex1}, the Bing space contains a $\theta$-closed discrete subset $A$ which is not $\theta$-discrete. Let $f:A\to B$ be any bijection on the subset $B=\IZ\times\{0\}$ of $\IB$. It is easy to see that the set $B$ is $\theta$-discrete and hence is $\theta$-closed and discrete. Since the $\theta$-discreteness is a topological property, the bijection $f$ cannot be extended to a homeomorphism of $\IB$.
\end{proof}

\section{Proof of Theorem~\ref{main1}}\label{s:proof}

Theorem~\ref{main1} will be proved by a standard back-and-forth argument. But first we need to introduce some notation and prove some auxiliary results.

A family $\U$ of subsets of a set $X$ is called {\em disjoint} if $U\cap V=\emptyset$ for any distinct sets $U,V\in\U$. We say that a cover $\U$ of a set $X$ {\em refines} a cover $\V$ of $X$ if each set $U\in\U$ is contained in some set $V\in\V$. For a disjoint cover $\U$ of a set $X$ and a point $x\in X$ by $\U(x)$ we denote the unique set $U\in\U$ containing $x$. For a subset $Z\subset X$ we put $\U(Z)=\bigcup_{x\in Z}\U(x)$. 

We endow the set $$\IQ+\sqrt{3}\IQ:=\{x+\sqrt{3}y:x,y\in\IQ\}$$ with the topology generated by the Euclidean metric $d(x,y)=|x-y|$. A subset $C$ of $\IQ+\sqrt{3}\IQ$ is {\em order-convex} if  for any points $u<v$ in $C$ the set $\{t\in\IQ+\sqrt{3}\IQ:u\le t\le v\}$ is contained in $C$.

\begin{claim}\label{cl1} Any open cover $\mathcal U$ of $\IQ+\sqrt{3}\IQ$ can be refined by a disjoint open cover consisting of order-convex sets.
\end{claim}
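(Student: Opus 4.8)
The plan is to exploit that $X:=\IQ+\sqrt{3}\IQ$ is a \emph{countable dense} subset of $\IR$, so that the set of ``gaps'' $G:=\IR\setminus X$ is dense in $\IR$ as well. Cutting $X$ at points of $G$ produces clopen order-convex pieces: for any $\alpha,\beta\in G\cup\{\pm\infty\}$ the set $(\alpha,\beta)\cap X$ is open, closed and order-convex in $X$. Thus it suffices to cover $X$ by a disjoint family of such ``gap-bounded'' intervals, each refining the given cover $\U$.

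First I would localize the cover. For each $x\in X$ choose $U_x\in\U$ with $x\in U_x$ and, using openness, some $\e_x>0$ with $(x-\e_x,x+\e_x)\cap X\subset U_x$. Since $G$ is dense I can pick $a_x\in G\cap(x-\e_x,x)$ and $b_x\in G\cap(x,x+\e_x)$ and set $I_x:=(a_x,b_x)$; then $I_x$ is a bounded open interval with endpoints in $G$, containing $x$, and with $I_x\cap X\subset U_x$.

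Next I would carry out a ``forth''-type recursion along a fixed enumeration $X=\{x_n\}_{n\in\w}$ to extract from $\{I_x\}$ a disjoint covering subfamily. Having defined pairwise disjoint gap-bounded intervals $\tilde C_0,\dots,\tilde C_{n-1}$, if $x_n$ already lies in one of them I set $\tilde C_n=\emptyset$; otherwise I let $\tilde C_n$ be the connected component, containing $x_n$, of $I_{x_n}\setminus\bigcup_{i<n}\overline{\tilde C_i}$. Here I use that $X$ is dense in $\IR$: two gap-bounded intervals disjoint in $X$ are already disjoint as real intervals (a common subinterval would meet $X$), so each $\overline{\tilde C_i}$ is a closed interval with gap endpoints and $\tilde C_n$ is again an open interval whose endpoints, being either $a_{x_n},b_{x_n}$ or an endpoint of some $\tilde C_i$, lie in $G$. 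By construction $\tilde C_n\subset I_{x_n}$, whence $C_n:=\tilde C_n\cap X\subset U_{x_n}$; the $C_n$ are pairwise disjoint and order-convex, and every $x_m$ is covered by step $m$. Hence $\{C_n:n\in\w,\ C_n\ne\emptyset\}$ is the required disjoint open cover by order-convex sets (it is automatically clopen, being a disjoint open cover).

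I expect the only genuine subtlety to be realizing order-convexity, disjointness and the refinement condition \emph{simultaneously}: because $X$ is non-compact, $\U$ admits no Lebesgue number and its members may shrink to nothing near a gap point, so the chosen intervals really must accumulate there. The recursion sidesteps any local-finiteness bookkeeping, and the twin density facts — $G$ dense (so endpoints can always be placed outside $X$) and $X$ dense (so $X$-disjoint intervals are $\IR$-disjoint) — are precisely what force the components $\tilde C_n$ to be gap-bounded and mutually disjoint.
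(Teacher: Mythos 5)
Your proof is correct and follows essentially the same route as the paper's: both exploit the density of $\IR\setminus(\IQ+\sqrt3\IQ)$ to produce clopen order-convex intervals with endpoints outside the space, and both run a greedy recursion along an enumeration of the space, carving each new interval out of the complement of the previously chosen ones to force disjointness while keeping it inside a member of $\U$. The only (immaterial) difference is that you take the maximal component of $I_{x_n}$ minus the earlier pieces, whereas the paper simply picks any sufficiently small gap-bounded interval around the least-indexed uncovered point.
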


\begin{proof} Write the countable set $X=\IQ+\sqrt{3}\IQ$ as $X=\{x_n\}_{n\in\w}$.
Let $\mathcal B$ be the family of all order intervals $(a,b)_X=\{x\in X:a<x<b\}$ where $a,b\in \IR\setminus X$ and $a<b$. Observe that each set $B\in\mathcal B$ is closed-and-open in $X$ and no finite subfamily of $\mathcal B$ covers $X$.

By induction, for every $k\in\w$ choose a number $n_k\in\w$ and sets $U_k\in\U$,  $B_k\in\mathcal B$ such that $$n_k=\min\{i\in \w:x_i\notin \bigcup_{j<k}B_j\},\;\; x_{n_k}\in U_k, \mbox{ \ and \ }x_{n_k}\subset B_k\subset U_k\setminus \bigcup_{j<k}B_j.$$

We claim that $X=\bigcup_{k\in\w}B_k$. Assuming that $X\ne \bigcup_{k\in\w}B_k$, we can consider the number $m=\min\{i\in\w:x_i\notin \bigcup_{k\in\w}B_k\}$ and conclude that $\{x_i\}_{i<m}\subset \bigcup_{k<p}B_k$ for some $p\in\w$. It follows that $n_p=\min\{i\in\w:x_i\notin \bigcup_{k<p}B_k\}=m$ and hence $x_m=x_{n_p}\in B_p\subset\bigcup_{k\in\w}B_k$, which contradicts the choice of $m$. This contradiction shows that $\{B_k\}_{k\in\w}\subset\mathcal B$ is a disjoint cover of $X$, refining the cover $\U$.
\end{proof}

Consider two maps 
$$\pr_-:\IB\to\IQ+\sqrt{3}\IQ,\;\pr_-:(x,y)\mapsto x-\sqrt{3}y,\quad\mbox{and}\quad
\pr_+:\IB\to\IQ+\sqrt{3}\IQ,\;\;\pr_+:(x,y)\mapsto x+\sqrt{3}y.$$ 
For every $z\in \IB$ let $z_-=\pr_-(z)$, $z_+=\pr_+(z)$ and  $z_\pm:=\{z_-,z_+\}$.
For a subset $Z\subset\IB$ let $Z_\pm:=\bigcup_{z\in Z}z_\pm$.

\begin{claim}\label{cl2} For any $\theta$-discrete subset $D\subset\IB$ the set $D_\pm$ is closed and discrete in $\IQ+\sqrt{3}\IQ$.
\end{claim}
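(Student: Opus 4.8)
The plan is to prove that $D_\pm$ has no accumulation point in $\IQ+\sqrt3\IQ$, which for a subset of a metric space is equivalent to being closed and discrete. So I would argue by contradiction: assuming some $t\in\IQ+\sqrt3\IQ$ is an accumulation point of $D_\pm$, I would produce a point $w\in\IB$ witnessing the failure of $\theta$-discreteness of $D$, namely a point $w$ such that $\overline O\cap D$ is infinite for \emph{every} neighborhood $O$ of $w$. Two elementary observations set this up. First, every $t=p+\sqrt3q\in\IQ+\sqrt3\IQ$ is realized as a projection of a point of $\IB$: taking $w=(p,|q|)$ one has $t\in\{w_-,w_+\}=\{p-\sqrt3|q|,\ p+\sqrt3|q|\}$, where the correct sign is dictated by the sign of $q$. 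Second, I would record how the closure in $\IB$ of a basic neighborhood of $w$ relates to closeness of projections; this is the technical core.

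For the core step, write $O_w^\delta$ for the basic neighborhood of $w$ of radius $\delta$, consisting of $w$ together with all points $(x,0)\in\IB$ whose coordinate $x$ lies within $\delta$ of $w_-$ or of $w_+$ (recall that for such an axis point $\pr_-(x,0)=\pr_+(x,0)=x$). These sets form a neighborhood base at $w$, so it suffices to control their closures. I claim the implication
\[
|z_\sigma-w_\tau|<\delta\ \text{ for some }\sigma,\tau\in\{-,+\}\ \Longrightarrow\ z\in\overline{O_w^\delta}.
\]
To prove it I would take an arbitrary basic neighborhood $O_z^\e$ of $z$ and exhibit a common point with $O_w^\delta$: since $\IQ$ is dense in $\IR$, there is an axis point $(x,0)\in\IB$ with $|x-z_\sigma|<\min(\e,\delta-|z_\sigma-w_\tau|)$, and the triangle inequality then places $x$ within $\e$ of $z_\sigma$ and within $\delta$ of $w_\tau$, so this point lies in $O_z^\e\cap O_w^\delta$. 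As $O_z^\e$ was arbitrary, $z\in\overline{O_w^\delta}$.

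With these in hand the contradiction is quick. If $t$ is an accumulation point of $D_\pm$, choose pairwise distinct $t_n\in D_\pm$ with $t_n\to t$; writing $t_n=z^{(n)}_{\sigma_n}$ with $z^{(n)}\in D$, and noting that each point of $D$ contributes at most two values to $D_\pm$, infinitely many of the $z^{(n)}$ are pairwise distinct, so after passing to a subsequence I may assume they all are. Pick $w\in\IB$ with $t\in\{w_-,w_+\}$, say $t=w_\tau$, by the first observation. Given any $\delta>0$, for all large $n$ we have $|z^{(n)}_{\sigma_n}-w_\tau|=|t_n-t|<\delta$, whence $z^{(n)}\in\overline{O_w^\delta}$ by the core implication; thus $\overline{O_w^\delta}\cap D$ is infinite for every $\delta>0$. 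Since $\{O_w^\delta\}_{\delta>0}$ is a neighborhood base at $w$, every neighborhood of $w$ has closure meeting $D$ in infinitely many points, contradicting the $\theta$-discreteness of $D$. Hence $D_\pm$ has no accumulation point and is closed and discrete in $\IQ+\sqrt3\IQ$.

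The step I expect to be the main obstacle is the core implication, that is, pinning down the interplay between the Bing topology and the projections $\pr_\pm$: one must check that taking closures of basic neighborhoods in $\IB$ corresponds to taking closed $\e$-neighborhoods of the projection values in $\IQ+\sqrt3\IQ$, so that a point of $\IB$ enters $\overline{O_w^\delta}$ precisely by having one of its projections $\delta$-close to a projection of $w$. Everything else — the realization of $t$ as a projection, the extraction of distinct points, and the final counting against $\theta$-discreteness — is routine.
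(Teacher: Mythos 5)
Your proof is correct and follows essentially the same route as the paper: both lift the point $t$ of $\IQ+\sqrt{3}\IQ$ to a point of $\IB$ via the projections, apply $\theta$-discreteness there, and rest on the same key fact that the closure in $\IB$ of a basic baseline neighborhood contains every point whose $\pr_\pm$-image lands nearby. The only differences are cosmetic: you argue by contradiction through accumulation points where the paper directly exhibits a small neighborhood, and you spell out the closure computation that the paper merely states as an observation.
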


\begin{proof} Given any point $x\in\IQ+\sqrt{3}\IQ$, we should find a neighborhood $O_x\subset \IQ+\sqrt{3}\IQ$ of $x$ such that $|O_x\cap D_\pm|\le 1$. It follows that $x\in z_\pm$ for some point $z\in\IB$. Since the set $D$ is $\theta$-discrete, the point $z$ has a neighborhood $O_z\subset \IB$ whose closure has at most one common point with the set $D$. By the definition of the topology of the Bing space $\IB$, there exists $\e>0$ such that $U_x:=\{y\in \IB_0:|x-y|\le\e\}\subset O_z$. Observe that the closure $\bar U_x$ of the set $U_x$ in $\IB$ contains all points $b\in \IB$ such that $\min\{|x-b_-|,|x-b_+|\}\le\e$. Then $O_x:=\{y\in\IQ+\sqrt{3}\IQ:|x-y|<\e\}$ is a neighborhood of $x$ in $\IQ+\sqrt{3}\IQ$ such that
$$|O_x\cap D_\pm|\le |\bar U_x\cap D|\le |\bar O_z\cap D|\le 1.$$
\end{proof}


For two sets $U,V\subset\IQ+\sqrt{3}\IQ$ let $$U\diamond V:=\{z\in\IB:z_\pm\subset U\cup V,\;z_\pm\cap U\ne\emptyset\ne V\cap z_\pm\}.$$

Any disjoint cover $\U$ of $\IQ+\sqrt{3}\IQ$ induces the disjoint cover $$\U^\diamond=\{U\diamond V:U,V\in\U\}$$
of the Bing space $\IB$. If the cover $\U$ consists of order-convex sets, then $\U^\diamond$ consists of subsets of $\IB$ resembling ``rombes'' and ``triangles'' with baseline on $\IB_0$.

Any (bijective) function $\varphi:\U\to\V$ between disjoint covers $\U,\V$ of $\IQ+\sqrt{3}\IQ$ induces a (bijective) function $\varphi^\diamond:\U^\diamond\to\V^\diamond$ assigning to each set $U\diamond V\in\U^\diamond$ the set $\varphi(U)\diamond\varphi(V)$.
\smallskip

Now we are ready to present {\em the proof of Theorem~\ref{main1}}. Fix any bijection $f_0:A_0\to B_0$ between two $\theta$-discrete subsets $A_0$ and $B_0$ of the Bing space $\IB$.

Let $$\check A:=\{a\in A_0\setminus \IB_0:f_0(a)\in\IB_0\},\;\;\check B=\{b\in B_0\setminus \IB_0:f_0^{-1}(b)\in\IB_0\}$$and
$$\hat A:=\{a\in A_0\cap \IB_0:f_0(a)\notin\IB_0\},\;\;\hat B=\{b\in B_0\cap \IB_0:f^{-1}(b)\notin\IB_0\}.$$
It is clear $f_0(\check A)=\hat B$ and $f_0(\hat A)=\check B$. 

Let $\e$ be a positive real number and $A',B'$ be $\theta$-discrete sets in $\IB_0$ such that $A_0\subset A'$ and $B_0\subset B'$.

A cover $\U$ of $\IQ+\sqrt{3}\IQ$ is called {\em $(A',B';\e)$-admissible} if
\begin{enumerate}
\item $\U$ is disjoint and consists of closed-and-open subsets of $\IQ+\sqrt{3}\IQ$;
\item for each $z\in \check A\cup\check B$ there are two disjoint order-convex sets $U_-$ and $U_+$ of diameter $<\e$ in $\IQ+\sqrt{3}\IQ$ such that $U_-\cap(A'_\pm\cup B'_\pm)=\{z_-\}$, $U_+\cap(A'_\pm\cup B'_\pm)=\{z_+\}$,  and $U_-\cup U_+=\U(z_-)=\U(z_+)$;
\item if a set $U\in\U$ is disjoint with $\check A_\pm\cup\check B_\pm$, then $U$  is order-convex, has diameter $<\e$, and $|U\cap(A'_\pm\cup B'_\pm)|{\le}1$.
\end{enumerate}

\begin{claim}\label{cl3} Any open cover $\U$ of $\IQ+\sqrt{3}\IQ$ can be refined by an $(A',B';\e)$-admissible cover $\A$ of $\IQ+\sqrt{3}\IQ$.
\end{claim}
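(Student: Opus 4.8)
My plan is to construct $\A$ in two stages. First I would produce a disjoint, order-convex, clopen refinement $\C$ of $\U$ each of whose members meets the set $D:=A'_\pm\cup B'_\pm$ in at most one point; then I would repair $\C$ at the projections of the boundary-crossing points $\check A\cup\check B$ by gluing members in pairs, so as to install the structure required by condition~(2).

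Set $D:=A'_\pm\cup B'_\pm$ and $E:=\check A_\pm\cup\check B_\pm\subseteq D$. Since $A'$ and $B'$ are $\theta$-discrete, Claim~\ref{cl2} shows that $D$, and hence $E$, is closed and discrete in $\IQ+\sqrt3\IQ$. Using this I would first pass to a fine open cover $\W$ refining $\U$: for each $x\in\IQ+\sqrt3\IQ$ pick $U^x\in\U$ containing $x$ and an open ball $W_x\subseteq U^x$ of diameter $<\e$ with $W_x\cap D=\{x\}$ if $x\in D$ (possible because $x$ is isolated in $D$) and $W_x\cap D=\emptyset$ if $x\notin D$ (possible because $D$ is closed). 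Applying Claim~\ref{cl1} to $\W$ produces a disjoint cover $\C=\{B_k\}_{k\in\w}$ by order-convex clopen sets, each contained in a member of $\W$; hence every $B_k$ is order-convex, has diameter $<\e$, lies in a member of $\U$, and satisfies $|B_k\cap D|\le 1$. This already yields conditions~(1) and~(3) for all members disjoint from $E$.

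The second stage installs condition~(2). For $z\in\check A\cup\check B$ the point $z$ lies off $\IB_0$, so $z_-\ne z_+$; and since $1,\sqrt3$ are linearly independent over $\IQ$, the maps $z\mapsto z_-$ and $z\mapsto z_+$ are injective on $\IB\setminus\IB_0$ with disjoint ranges. Consequently, for distinct $z,z'\in\check A\cup\check B$ the four points $z_-,z_+,z'_-,z'_+$ are pairwise distinct members of $D$, and every point of $E$ arises as $z_-$ or $z_+$ for exactly one $z$. Because each $B_k$ meets $D$ in at most one point, $\C(z_-)\ne\C(z_+)$, and any member of $\C$ meeting $E$ meets it in a single point. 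Thus the members of $\C$ meeting $E$ split into the disjoint pairs $\{\C(z_-),\C(z_+)\}$, and I would set
$$\A:=\{\C(z_-)\cup\C(z_+):z\in\check A\cup\check B\}\cup\{B\in\C:B\cap E=\emptyset\}.$$
Checking admissibility is then routine: $\A$ is a disjoint clopen cover (gluing disjoint pairs keeps disjointness), giving~(1); for $z\in\check A\cup\check B$ the sets $U_-:=\C(z_-)$ and $U_+:=\C(z_+)$ are disjoint, order-convex, of diameter $<\e$, with $U_-\cap D=\{z_-\}$, $U_+\cap D=\{z_+\}$ and $U_-\cup U_+=\A(z_-)=\A(z_+)$, giving~(2); and every member of $\A$ disjoint from $E$ is a surviving $B\in\C$, giving~(3).

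The step I expect to require the most care is the consistency of the gluing: one must be sure that no member of $\C$ is claimed by two different pairs, and this is exactly where the injectivity of the projections on $\IB\setminus\IB_0$ (together with the disjointness of their ranges) and the bound $|B_k\cap D|\le1$ are used jointly. I would also keep track of refinement: $\C$ itself refines $\U$, and after gluing each surviving member as well as each order-convex constituent $U_-,U_+$ of a glued set still lies in a member of $\U$, so the fineness provided by $\U$ persists at the level of the order-convex pieces; a glued set $U_-\cup U_+$ itself need not sit inside a single member of $\U$, since $z_-$ and $z_+$ may be far apart, and this is precisely the configuration that condition~(2) is designed to accommodate.
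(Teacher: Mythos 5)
Your proof is correct and follows essentially the same route as the paper: a fine disjoint order-convex clopen refinement obtained from Claims~\ref{cl1} and~\ref{cl2}, followed by gluing the two members containing $z_-$ and $z_+$ for each $z\in\check A\cup\check B$ (the paper's $\ddot C=C\cup\dot C$). Your closing caveat that a glued set need not lie in a single member of $\U$ is a genuine one that the paper's proof also leaves unaddressed, though it is harmless in the paper's applications, where the cover being refined is itself admissible and hence already places $z_-$ and $z_+$ in a common member.
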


\begin{proof} By Claim~\ref{cl2}, the sets $A'_\pm$ and $B'_{\pm}$ are closed and discrete in $\IQ+\sqrt{3}\IQ$. Since the set $D=A'_\pm\cup B'_\pm$ is closed and discrete in $\IQ+\sqrt{3}\IQ$, for every $x\in \IQ+\sqrt{3}\IQ$ we can find a neighborhood $O_x\subset\IQ+\sqrt{3}\IQ$ of $x$ such that $O_x$ is contained in some set $U\in\U$, has diameter $<\e$, and $|O_x\cap D|\le 1$. By Claim~\ref{cl1}, the cover $\{O_x:x\in\IQ+\sqrt{3}\IQ\}$ can be refined by a disjoint open cover $\C$ consisting of order-convex subsets of $\IQ+\sqrt{3}\IQ$. Since the cover $\C$ is disjoint and open, each set $C\in\C$ is closed-and-open in $\IQ+\sqrt{3}\IQ$. 
It follows that each set $C\in\C$ has diameter $<\e$ and contains at most one point of the set $D$. Let $\ddot\C=\{C\in \C:C\cap (\check A_\pm\cup\check B_\pm)\ne\emptyset\}$. For every set $C\in\ddot\C$ find a (unique) point $z\in \check A\cup \check B$ such that $C\cap\{z_-,z_+\}\ne\emptyset$. Since $|C\cap\{z_-,z_+\}|=1$, there exists a set $\dot C\in\ddot\C\setminus\{C\}$ such that $\dot C\cap \{z_-,z_+\}\ne\emptyset$. Put $\ddot C=C\cup \dot C$. It is easy to check that the cover $$\A:=(\C\setminus\ddot \C)\cup\{\ddot C:C\in\ddot \C\}$$is $(A',B';\e)$-admissible.
\end{proof}

Using Claim~\ref{cl3}, choose an $(A_0,B_0,1)$-admissible cover $\U_0$ of $\IQ+\sqrt{3}\IQ$. Choose a bijective map $\varphi_0:\U_0\to\U_0$ such that $\varphi_0(\U_0(a_-))=\U_0(f_0(a)_-)$ and $\varphi_0(\U_0(a_+))=\U_0(f_0(a)_+)$ for any $a\in A_0$. Then $f(a)\in \varphi_0^\diamond(\U_0^\diamond(a))$ for each $a\in A_0$.

Let $\preceq$ be any well-order of the countable set $\IB$ such that for every $z\in\IB$ the set $\{b\in\IB:b\preceq z\}$ is finite. For a subset $C\subset \IB$ by $\min C$ we denote the smallest element of the set $C$ in the well-ordered set $(\IB,\preceq)$.

By induction we shall construct:
\begin{itemize}
\item increasing sequences $(A_n)_{n\in\w}$ and $(B_n)_{n\in\w}$ of $\theta$-discrete subsets of $\IB$;\,
\item sequences of points $(a_n)_{n\in\IN}$, $(a_n')_{n\in\IN}$, $(b_n)_{n\in\IN}$, $(b_n')_{n\in\IN}$ of $\IB$,
\item a sequence of bijective functions $(f_n:A_n\to B_n)_{n\in\w}$,
\item a sequence $(\U_n)_{n\in\w}$ of $(A_n,B_n;2^{-n})$-admissible covers $\U_n$ of $\IQ+\sqrt{3}\IQ$,
\item a sequence of bijective functions $(\varphi_n:\U_n\to \U_n)_{n\in\w}$,
\end{itemize}
such that for every $n\in\IN$ the following conditions are satisfied:
\begin{enumerate}
\item $a_n=\min (\IB\setminus A_{n-1})$;
\item $b_n\in \varphi^\diamond_{n-1}(\U^\diamond_{n-1}(a_n))\setminus B_{n-1}$;
\item $b_n\in\IB_0$ if and only if $a_n\in\IB_0$;
\item $b_n'=\min \big(\IB\setminus (B_{n-1}\cup\{b_n\})\big)$;
\item $a_n'\in \IB\setminus (A_{n-1}\cup\{a_n\})$;
\item $a_n'\in\IB_0$ if and only if $b_n'\in\IB_0$;
\item $b_n'\in\varphi^\diamond_{n-1}(\U^\diamond_{n-1}(a_n'))$;
\item $A_n=A_{n-1}\cup \{a_n,a_n'\}$;
\item $B_n=B_{n-1}\cup \{b_n,b_n'\}$;
\item $f_n{\restriction}A_{n-1}=f_{n-1}$, $f_n(a_n)=b_n$, $f_n(a_n')=b_n'$;
\item each set $V\in\U_n$ is contained in some set $U\in\U_{n-1}$;
\item for every $U\in\U_{n-1}$ the family $\{V\in\U_n:V\subset U\}$ is infinite;
\item for every $U\in\U_{n-1}$ and $V\in\U_n$ with $V\subset U$ we have $\varphi_n(V)\subset\varphi_{n-1}(U)$; 
\item for every $x\in A_n$ we get $f_n(x)\in \varphi_n^\diamond(\U_n^\diamond(x))$.
\end{enumerate}

Assume that for some $n\in\IN$ and every $k<n$ we have constructed $\theta$-discrete sets $A_k,B_k$, a bijective function $f_k:A_k\to B_k$, an $(A_k,B_k;2^{-k})$-admissible cover $\U_k$ of $\IB$ and a bijective function $\varphi_k:\U_k\to\U_k$ that satisfy the inductive conditions $(1)$--$(14)$. Let $a_n$ be the point determined by the condition (1). 

If $a_n\in\IB_0$, then $a_n\in \U_{n-1}(a_n)\diamond\U_{n-1}(a_n)$ and $\varphi_{n-1}^\diamond(\U_{n-1}^\diamond(a_n))=\varphi_{n-1}(\U_{n-1}(a_n))\diamond \varphi_{n-1}(\U_{n-1}(a_n))$. It follows that the intersection $\varphi_{n-1}^\diamond(\U_{n-1}^\diamond(a_n))\cap \IB_0$ is a non-empty open set in $\IB_0$. Since $\IB_0$ has no isolated points and the set $\IB_0\cap B_{n-1}$ is discrete in $\IB_0$, there exists an element $b_n\in \IB_0\cap \varphi_{n-1}^\diamond(\U_{n-1}^\diamond(a_n))\setminus B_{n-1}$.
If $a_n\notin\IB_0$, then the set $\varphi_{n-1}^\diamond(\U_{n-1}^\diamond(a_n))\setminus (\IB_0\cup B_{n-1})$ is infinite and hence contains a point $b_n$. 

Let $b_n'$ be the point determined by the condition (5). By analogy with the choice of the point $b_n$, choose a point $a_n'$ satisfying the conditions (6), (7). Define the sets $A_n$ and $B_n$ using the conditions (8), (9) and the bijective map $f_n:A_n\to B_n$ using the condition (10). The set $A_n$ is $\theta$-discrete, being the union of the $\theta$-discrete set $A_{n-1}$ and the finite (and hence $\theta$-discrete) set $\{a_n,a_n'\}$. By analogy we can show that the set $B_n=B_{n-1}\cup\{b_n,b_n'\}$ is $\theta$-discrete.

Applying Claim~\ref{cl3}, choose an $(A_n,B_n;2^{-n})$-admissible cover $\U_n$ satisfying the conditions (11) and (12).

Now we define a bijective map $\varphi_n:\U_n\to\U_n$ satisfying the conditions (13) and (14). Write the sets $A_n$ and $B_n$ as $A_n=\check A\cup \hat A\cup \dot A_n\cup\ddot A_n$ and $B_n=\check B\cup \hat B\cup \dot B_n\cup\ddot B_n$
where $$\dot A_n=A_n\cap\IB_0\setminus(\check A\cup\hat A),\;\;\ddot A_n= A_n\setminus(\IB\cup\check A\cup\hat A),\;\;\ddot B_n=B_n\cap\IB_0\setminus(\check B\cup\hat B),\;\;\ddot B_n= B_n\setminus(\IB_0\cup\check B\cup\hat B).$$
The definition of the sets $\check A,\hat A$, $\check B$ and $\hat B$ and the inductive conditions (3,6,10) ensure that $$f_n(\check A)=\hat B,\;\;f_n(\hat A)=\check B,\;\;f_n(\ddot A_n)=\ddot B_n,\;\;f_n(\dot A_n)=\dot B_n.$$
In the cover $\U_n$ consider the subfamilies:
\begin{itemize}
\item[] $\check \V_n=\{U\in\U_n:\exists a\in \check A\;(a_\pm\subset U)\}$, \ $\check \W_n=\{U\in\U_n:\exists b\in \check B\;(b_\pm\subset U)\}$, \ , \;\; 
\item[] $\hat \V_n=\{U\in\U_n:\exists a\in \hat A\;(a\in U)\}$, \;\; $\hat \W_n=\{U\in\U_n:\exists b\in \hat B\;(b\in U)\}$
\item[] $\dot\V_n=\{U\in\U_n:\exists a\in\dot A_n\;\;(a\in U)\}$,\;\;$\dot\W_n=\{U\in\U_n:\exists b\in\dot B_n\;\;(b\in U)\}$;
\item[] $\ddot\V^-_n=\{U\in\U_n:\exists a\in\ddot A_n\;\;(a_-\in U)\}$, \ $\ddot\W^-_n=\{U\in\U_n:\exists b\in\ddot B_n\;\;(b_-\in U)\}$;
\item[] $\ddot\V^+_n=\{U\in\U_n:\exists a\in\ddot A_n\;\;(a_+\in U)\}$, \ $\ddot\W^+_n=\{U\in\U_n:\exists b\in\ddot B_n\;\;(b_+\in U)\}$.
\end{itemize}
The $(A_n,B_n;2^{-n})$-admissibility of the cover $\U_n$ ensures that the families 
$\check \V_n,\hat\V_n,\dot\V_n,\ddot\V_n^-,\ddot\V_n^+$ are pairwise disjoint. By the same reason, the families $\check \W_n,\hat\W_n,\dot\W_n,\ddot\W_n^-,\ddot\W_n^+$ are pairwise disjoint. Let $$\V_n:=\U_n\setminus(\check \V_n\cup \hat\V_n\cup \dot\V_n\cup\ddot\V_n^-\cup\ddot\V_n^+)\mbox{ \ and \ }
\W_n:=\U_n\setminus(\check \W_n\cup \hat\W_n\cup \dot\W_n\cup\ddot\W_n^-\cup\ddot\W_n^+).
$$ 

The $(A_{n-1},B_{n-1},2^{n-1})$-admissibility of the cover $\U_{n-1}$ implies that every set $U\in\U_{n-1}$ has finite intersection with the set $(A_n\cup B_n)_{\pm}$, which implies that the family $\U_n[U]=\{V\in\U_n:V\subset U\}$ has finite  intersection with the families $\check \V_n\cup \hat\V_n\cup \dot\V_n\cup\ddot\V_n^-\cup\ddot\V_n^+$ and $\check \W_n\cup \hat\W_n\cup \dot\W_n\cup\ddot\W_n^-\cup\ddot\W_n^+$. Combining this fact with the inductive condition (12), we conclude that the families $\U_n[U]\cap\V_n$ and $\U_n[U]\cap\W_n$ are infinite. Then we can define a bijective function $\varphi_n:\U_n\to\U_n$ satisfying the following conditions:
\begin{itemize}
\item [(a)] for any $U\in\U_{n-1}$ we have $\varphi_n(\U_n[U]\cap \V_n)=\U_n[\varphi_{n-1}(U)]\cap\W_n$ ;
\item [(b)] for any $U\in\check \V_n$ there exists a unique $a\in \check A$ with $a_\pm\subset U$ and then $\varphi_n(U)$ is the unique set in $\hat \W_n$ containing the point $f_0(a)\in B\cap\IB_0$;
\item [(c)] for any $U\in\hat \V_n$ there exists a unique $a\in \hat A$ with $a\in U$ and then $\varphi_n(U)$ is the unique set in $\check \W_n$ containing the doubleton $f_0(a)_\pm$;
\item [(d)] for any $U\in\dot \V_n$ there exists a unique $a\in \dot A_n$ with $a\in U$ and then $\varphi_n(U)$ is the unique set in $\dot \W_n$ containing the point  $f_n(a)\in \dot B_n$;
\item [(e)] for any $U\in\ddot \V^-_n$ there exists a unique $a\in \ddot A_n$ with $a_-\in U$ and then $\varphi_n(U)$ is the unique set in $\dot \W_n$ containing the point  $f_n(a)_-$;
\item [(f)] for any $U\in\ddot \V^+_n$ there exists a unique $a\in \ddot A_n$ with $a_+\in U$ and then $\varphi_n(U)$ is the unique set in $\dot \W_n$ containing the point  $f_n(a)_+$.
\end{itemize}

Such choice of the bijection $\varphi_n$ ensures that it satisfies the inductive conditions (13) and (14). This completes the inductive step.
\smallskip

After completing the inductive construction, observe that the conditions (1) and (4) of the inductive construction imply that $\IB=\bigcup_{n\in\w}A_n=\bigcup_{n\in\w}B_n$. Next, consider the bijective map $f:\IB\to\IB$ defined by $f|A_n=f_n$ for all $n\in\w$ and observe that $f{\restriction}A_0=f_0$. 
Using the $(A_n,B_n,2^{-n})$-admissibility of the covers $\U_n$ and  the conditions (1)--(14), it can be shown that the map $f$ is a homeomorphism.


\begin{thebibliography}{}

\bibitem{MO} T.~Banakh, {\em Is Bing's countable connected space topologically homogeneous?},\newline {\tt  https://mathoverflow.net/questions/286366}.

\bibitem{BMT} T.~Banakh, J.~Mioduszeski, S.~Turek, {\em On continuous self-maps and homeomorphisms of the Golomb space}, Comment. Math. Univ. Carolin. {\bf 59}:4 (2018) 423--442.

\bibitem{Bing} R.H.~Bing, {\em A connected countable Hausdorff space}, Proc. Amer. Math. Soc. {\bf 4} (1953), p.474.

\bibitem{Brown} M. Brown, {\em A countable connected Hausdorff space}, Bull. Amer. Math. Soc. {\bf 59} (1953), 367. Abstract \#423.

\bibitem{Cvid} S.F.~Cvid, {\em Countable connected spaces}, Current questions in mathematical logic and set theory, pp. 276--284. Moskov. Gos. Ped. Inst. im. Lenina, Moscow, 1975 (in Russian).

\bibitem{En} R.~Engelking, {\em General Topology}, Heldermann Verlag, Berlin, 1989.

\bibitem{Golomb59} S.~Golomb, {\em A connected topology for the integers}, Amer. Math. Monthly {\bf 66} (1959), 663--665.

\bibitem{Golomb61} S.~Golomb, {\em Arithmetica topologica}, in: General Topology and its Relations to Modern Analysis and Algebra (Proc. Sympos., Prague, 1961), Academic Press, New York; Publ. House Czech. Acad. Sci., Prague (1962) 179--186; available at {\tt https://dml.cz/bitstream/handle/10338.dmlcz/700933/Toposym\_01-1961-1\_41.pdf})

\bibitem{Gustin} W.~Gustin, {\em Countable connected spaces}, Bull. Amer. Math. Soc. {\bf 52} (1946), 101--106.

\bibitem{Kannan} V.~Kannan, {\em A countable connected Urysohn space containing a dispersion point}, Proc. Amer. Math. Soc. {\bf 35} (1972), 289--290.

\bibitem{Kirch} A.M.~Kirch, {\em A countable, connected, locally connected Hausdorff space}, Amer. Math. Monthly. {\bf 76} (1969), 169--171.

\bibitem{Law} L.B.~Lawrence, {\em Infinite-dimensional countable connected Hausdorff spaces}, Houston J. Math. {\bf 20}:3 (1994), 539--546.

\bibitem{Martin} J.~Martin, {\em A countable Hausdorff space with a dispersion point}, Duke Math. J. {\bf 33} (1966) 165--167.

\bibitem{Miches} G.J.~Michaelides, {\em On countable connected Hausdorff spaces}, 1970 Studies and Essays (Presented to Yu-why Chen on his 60th Birthday, April 1, 1970) pp. 183--189 Math. Res. Center, Nat. Taiwan Univ., Taipei.

\bibitem{Miller} G.G.~Miller, {\em Countable connected spaces}, Proc. Amer. Math. Soc. {\bf 26} (1970), 355--360.

\bibitem{OR} R.G.~Ori, M.~Rajagopalan, {\em On countable connected locally connected almost regular Urysohn spaces}, Topology Appl. {\bf 17}:2 (1984), 157--171.

\bibitem{Rit76} G.~Ritter, {\em A connected, locally connected, countable Hausdorff space}, Amer. Math. Monthly {\bf 83}:3 (1976), 185--186.

\bibitem{Rit77} G.~Ritter, {\em A connected, locally connected, countable Urysohn space}, General Topology and Appl. {\bf 7}:1 (1977), 65--70.

\bibitem{Rit78} G.~Ritter, {\em Countable connected Urysohn spaces which are almost regular}, Houston J. Math. {\bf 4}:1 (1978), 113--119.



\bibitem{SS} L.A.~Steen, J.A. Seebach, Jr. {\em Counterexamples in Topology}, Dover Publications, Inc., Mineola, NY, 1995.

\bibitem{Tz88} V.~Tzannes, {\em Three countable connected spaces}, Colloq. Math. {\bf 56}:2 (1988), 267--279.

\bibitem{Tz96} V.~Tzannes, {\em A countable widely connected Hausdorff space}, Topology Appl. {\bf 69}:1 (1996), 63--70.

\bibitem{Tz98} V.~Tzannes, {\em On countable first countable connected Hausdorff spaces}, Questions Answers Gen. Topology {\bf 16}:2 (1998), 145--154.

\bibitem{Urysohn} P.~Urysohn, {\em Uber die Machtigkeit der zusammenhangenden Mengen}, Math. Ann. {\bf 94}:1 (1925), 262--295.

\end{thebibliography}
\end{document}